\newtheorem{Theorem}{Theorem}[section]
\newtheorem{Lemma}[Theorem]{Lemma}
\newtheorem{Proposition}[Theorem]{Proposition}
\newtheorem{Definition}[Theorem]{Definition}
\def\V{\mbox{Var}}
\def\R\re
\def\V{\bf V}
\def \re{{\mathbb R}}
\def \0{\lambda_{0}}
\begin{document}
\title[Yamabe constants]{A note on Yamabe constants of products with hyperbolic spaces}

\author[G. Henry]{Guillermo Henry}\thanks{G. Henry is supported
by a postdoctoral fellowship of CONICET}
  \address{Departamento de Matem\'atica, FCEyN, Universidad de Buenos
Aires, Ciudad Universitaria, Pab. I., C1428EHA,
           Buenos Aires, Argentina.}
\email{ghenry@dm.uba.ar}

\author[J. Petean]{Jimmy Petean}\thanks{J. Petean is supported
by grant 106923-F of CONACYT}
 \address{CIMAT  \\
          A.P. 402, 36000 \\
          Guanajuato. Gto. \\
          M\'exico \\
           and Departamento de Matem\'{a}ticas, FCEyN \\
          Universidad de Buenos Aires, Argentina (on leave).}
\email{jimmy@cimat.mx}

\subjclass{53C21}

\date{}


\begin{abstract} We study
the ${\bf H}^n$-Yamabe constants of Riemannian products $({\bf H}^n
\times M^m , g_h^n +g)$,
where $(M,g)$ is a compact Riemannian manifold of constant scalar curvature and
$g_h^n$ is the hyperbolic metric on ${\bf H}^n$.
Numerical calculations can be carried out due to the uniqueness of
(positive, finite energy) solutions of the equation
$\Delta u -\lambda u +  u^q =0$ on hyperbolic space ${\bf H}^n$
under appropriate bounds on the parameters $\lambda, q$, as shown by
G. Mancini and K. Sandeep.
We do explicit numerical estimates in the cases
$(n,m)=(2,2),(2,3)$ and $(3,2)$.
\end{abstract}

\maketitle

\section{Introduction}

For a closed Riemannian k-dimensional manifold $(W^k ,g)$
the  Yamabe constant
of its conformal class $[g]$ is defined as

$$ Y(W,[g]) = \inf_{h\in [g]} \frac{\int_W s_h \ dv_h }{Vol(W,h)^{\frac{k-2}{k}}}
  $$

\noindent
where $s_h$ is the scalar curvature, $dv_h$ the volume element
and $Vol(W,h) = \int_W   dv_h $ is the volume of $(W,h)$.

We let $a_k =
\frac{4(k-1)}{k-2}$ and
$p=p_k=\frac{2k}{k-2}$. For $h\in [g]$ we write
$h=f^{p-2} g$ for a function $f:W \rightarrow {\re}_{>0}$
and write the previous expression in terms of $f$ and $g$: we have

$$Y(W,[g]) = \inf_{f\in C_+^{\infty}} Y_g (f) ,$$

\noindent
where
$$Y_{g} (f)=\frac{\int_{W}a_{k}|\nabla f|^2+s_g f^2 \ dv_{g}}{\|f\|_{p_{k}}^2}.$$

We will call $Y_g$ the Yamabe functional. Its critical
points are solutions of the Yamabe equation:

$$-a_k \Delta_g f + s_g f = \mu f^{p-1} ,$$

\noindent
where $\mu$ is a constant ($\mu = Y_g (f) {\| f \|}_p^{2-p}$):
this means that the corresponding metric $h=f^{p-2} g$
has constant scalar curvature. The {\it Yamabe problem},
which consists in finding metrics of constant scalar curvature
in a given conformal class, was solved for closed Riemannian
manifolds by showing that the infimum in the definition of
the Yamabe constant is always achieved.

There are different possible ways to try to generalize these ideas to
non-compact manifolds. The non-compact case has attracted the atention
of many authors (see for instance \cite{Botvinnik, Akutagawa, Kim, Ruiz} )
for the interest in the problem
itself and also because non-compact examples play an
important role when studying the Yamabe invariant (the supremum
of the Yamabe constants over the family of conformal classes of
metrics on a fixed closed manifold).

\vspace{.5cm}

In this article we will study the case
when the manifold is a Riemannian product $({\bf{H}}^n\times M^m,g_h^n+g)$
where $(M^m,g)$  is a   closed Riemannian manifold of
constant scalar curvature and $({\bf{H}}^n,g_h^n)$
is the $n-$dimensional hyperbolic space of
 curvature $-1$. We denote by  ${\bf s} =s_g-n(n-1)$  the scalar curvature of
 $g_h^n+g$. We define their Yamabe constant as:

$$Y({\bf{H}}^n\times M^m,g_h^n+g) =
\inf_{f \in  L_1^2 ({\bf{H}}^n \times M^m) - \{ 0 \}} \ \ Y_{g_h^n+g} \  (f).$$

\noindent
Note that this is well defined since the Sobolev embedding
 $L_1^2 ({\bf{H}}^n \times M^m) \subset L^p ({\bf{H}}^n \times M^m)$
holds (see \cite[Theorem 2.21]{Aubin}).

It is important the case when $(M,g)$ is $(S^m,rg_0^m)$,
where $g_0^m$ is the round metric of constant curvature 1 and
$r$ is a positive constant , since it plays a fundamental
role in understanding the
behaviour of the Yamabe invariant under surgery
(see \cite[Theorem 1.3 and Section 3]{Ammann}). It can be seen via
symmetrizations that the
infimum in the definition is achieved by a function
which is radial in both variables (it depends only on the distance
to the origin in hyperbolic space and on the distance to a fixed
point in the sphere).
It has been conjectured \cite{Akutagawa, Ammann} that
the minimizer actually depends only on the
 ${\bf{H}}^n$-variable. The main objective of this article is
to show that if this were the case then one could compute the
corresponding Yamabe constants numerically.

We recall the following definition from \cite{Akutagawa}:

\begin{Definition} For a Riemannian product $(N\times M, h+g)$ we define the
$N$-Yamabe constant as

$$Y_N (N\times M, h+g) =  \inf_{f\in L_1^2 (N) -\{ 0 \} } Y_{h+g} (f) .$$

\end{Definition}

We will study  $Y_{{\bf{H}}^n}
({\bf{H}}^n\times M^m,g_h^n+g)$, where $(M,g)$ is a closed Riemannian
manifold of
constant scalar curvature $s_g$ and volume $V_g$. Note that

$$Y_{{\bf{H}}^n}
({\bf{H}}^n\times M^m,g_h^n+g) =  V_g^{\frac{2}{m+n}} \ \inf_{f\in L_1^2 ({\bf{H}}^n ) }
\frac{\int_{{\bf{H}}^n}a_{n+m}|\nabla f|^2+ {\bf s} f^2 \
dv_{g_h^n}}{\|f\|_{p_{n+m}}^2}. $$

If $f\in L_1^2 ({\bf{H}}^n )$ is a critical point of $Y_{g_h^n + g}$
restricted
to  $ L_1^2 ({\bf{H}}^n )$, then it satisfies the subcritical equation

$$-a_{n+m} \Delta_h f + {\bf s} f = \mu f^{p_{m+n} -1}
\ \   $$

\noindent
where $\mu $ is a constant  (it is called a subcritical equation
since $p_{m+n} <p_n$).

Let

$$c_{m,n} =(n-1)(m-1)/(m+n-2) .$$

In Section 2 we will prove:

\begin{Theorem} If $s_g >  c_{m,n}$ then $Y_{{\bf{H}}^n}
({\bf{H}}^n\times M^m,g_h^n+g) > 0$ and the constant is achieved.
If $s_g =  c_{m,n}$
 then $Y_{{\bf{H}}^n}
({\bf{H}}^n\times M^m,g_h^n+g) > 0$ but the constant is not achieved.
If $s_g <  c_{m,n}$ then $Y_{{\bf{H}}^n}
({\bf{H}}^n\times M^m,g_h^n+g) =-\infty $.

\end{Theorem}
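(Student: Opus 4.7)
The plan rests on the algebraic identity $c_{m,n}=n(n-1)-\tfrac{a_{m+n}(n-1)^{2}}{4}$, under which the trichotomy $s_g\gtreqless c_{m,n}$ becomes $\mathbf{s}+\tfrac{a_{m+n}(n-1)^{2}}{4}\gtreqless 0$. The central analytic tool is the sharp Poincar\'e inequality on hyperbolic space,
\[
\int_{\mathbf{H}^{n}}|\nabla f|^{2}\,dv_{g_h^n}\ge \frac{(n-1)^{2}}{4}\int_{\mathbf{H}^{n}}f^{2}\,dv_{g_h^n},
\]
whose constant $(n-1)^{2}/4=\lambda_{1}(\mathbf{H}^{n})$ is optimal but not attained on $L_{1}^{2}(\mathbf{H}^{n})\setminus\{0\}$.

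For $s_g>c_{m,n}$ I would split the numerator of $Y_{g_h^n+g}$ as
\[
(1-\theta)a_{m+n}\!\int|\nabla f|^{2}+\Bigl(\theta a_{m+n}\!\int|\nabla f|^{2}+\mathbf{s}\!\int f^{2}\Bigr),
\]
choosing $\theta\in(0,1)$ so that $\theta a_{m+n}(n-1)^{2}/4+\mathbf{s}\ge 0$; the hypothesis is exactly what makes such a $\theta$ available, and Poincar\'e makes the bracket non-negative. The resulting coercivity, combined with the subcritical Sobolev embedding $L_{1}^{2}(\mathbf{H}^{n})\hookrightarrow L^{p_{m+n}}(\mathbf{H}^{n})$, gives $Y_{\mathbf{H}^{n}}>0$. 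For the achievement I would pass to a Schwarz-symmetrized minimizing sequence on $\mathbf{H}^{n}$ (rearrangement lowers Dirichlet energy and preserves both $\int f^{2}$ and $\|f\|_{p_{m+n}}$) and then invoke the hyperbolic analogue of Strauss' lemma---compactness of the embedding of radial $L_{1}^{2}$-functions into $L^{p_{m+n}}$ in the subcritical range---together with weak lower semicontinuity of the Dirichlet integral to produce a minimizer.

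For $s_g<c_{m,n}$ I would exhibit an explicit divergent sequence. Fix $o\in\mathbf{H}^{n}$, let $r(x)$ denote hyperbolic distance to $o$, and set $f_R(x)=\phi_R(r(x))\,e^{-(n-1)r(x)/2}$, where $\phi_R$ is a smooth cutoff equal to $1$ on $[0,R-1]$ and vanishing on $[R,\infty)$. Using the factorization $\sinh^{n-1}(r)=2^{1-n}e^{(n-1)r}(1-e^{-2r})^{n-1}$, a direct computation yields $\int f_R^{2}\sim C_{1}R$, whereas $\int|\nabla f_R|^{2}-\tfrac{(n-1)^{2}}{4}\int f_R^{2}=O(1)$ and $\|f_R\|_{p_{m+n}}$ stays bounded as $R\to\infty$. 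Since $\mathbf{s}+a_{m+n}(n-1)^{2}/4<0$, the numerator behaves like $\bigl(\mathbf{s}+a_{m+n}(n-1)^{2}/4\bigr)C_{1}R\to-\infty$ while the denominator remains bounded, forcing the Yamabe ratio to $-\infty$.

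The main obstacle is the boundary case $s_g=c_{m,n}$, where the coercivity of the previous paragraph degenerates. Positivity of $Y_{\mathbf{H}^{n}}$ reduces to the Hardy-Sobolev-Maz'ya-type inequality on $\mathbf{H}^{n}$, $\|f\|_{p_{m+n}}^{2}\le C\bigl(\int|\nabla f|^{2}-\tfrac{(n-1)^{2}}{4}\int f^{2}\bigr)$, valid in the subcritical range; this can be derived by the ground-state substitution $f=e^{-(n-1)r/2}u$ which turns the Poincar\'e-improved Dirichlet form into the unweighted Dirichlet energy of $u$ in a conformally related weight, followed by a standard weighted Sobolev estimate. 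Non-attainment is the delicate point: a minimizer $f_0\ge 0$ would satisfy the Euler-Lagrange equation $-\Delta_{h}f_0-\tfrac{(n-1)^{2}}{4}f_0=\mu f_0^{p_{m+n}-1}$ with $\mu>0$, sitting precisely at the spectral threshold $\lambda_{1}(\mathbf{H}^{n})$. Asymptotic ODE analysis then forces $f_0\sim e^{-(n-1)r/2}$ at infinity, and the computation of paragraph 3 shows this borderline decay is incompatible with $f_0\in L^{2}(\mathbf{H}^{n})$; equivalently, one can invoke the Mancini-Sandeep non-existence result at the spectral bottom.
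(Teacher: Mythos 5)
Your overall scheme runs parallel to the paper's: the identity $c_{m,n}=n(n-1)-\tfrac{a_{m+n}(n-1)^2}{4}$ together with the sharp hyperbolic Poincar\'e constant $(n-1)^2/4$ is exactly the paper's starting point (it rewrites $Y^s_{n,m}(f)$ so that the sign of $s_g-c_{m,n}$ is the sign of the zero-order coefficient after subtracting the Poincar\'e term). For $s_g<c_{m,n}$ your test functions $f_R=\phi_R e^{-(n-1)r/2}$, with $\int f_R^2\sim C_1R$, Poincar\'e-corrected Dirichlet term $O(1)$ and bounded $\|f_R\|_{p_{m+n}}$, give a correct and more self-contained divergence argument; the paper instead finds one compactly supported $f$ with negative energy and uses $k$ disjoint copies via $Y^s_{n,m}(f_k)=k^{1-2/p}Y^s_{n,m}(f)\to-\infty$. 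For $s_g>c_{m,n}$ the $\theta$-splitting is fine provided you take the strict inequality $\theta a_{m+n}(n-1)^2/4+\mathbf{s}>0$ with $\theta<1$ (with only ``$\geq 0$'' you control just $\|\nabla f\|_2$, while the Sobolev embedding needs the full $L^2_1$ norm), and your symmetrization plus radial (Strauss-type) compactness route to a minimizer is a legitimate alternative to the paper, which simply invokes Mancini--Sandeep, Theorem 5.1. One repair is needed there: when $c_{m,n}<s_g<n(n-1)$ the coefficient $\mathbf{s}$ is negative, so ``weak lower semicontinuity of the Dirichlet integral'' does not suffice, and radial compactness does not give strong $L^2$ convergence (the exponent $2$ is exactly where the tail estimate fails); instead note that $f\mapsto a_{m+n}\|\nabla f\|_2^2+\mathbf{s}\|f\|_2^2$ is a bounded quadratic form which is nonnegative by Poincar\'e, hence convex and weakly lower semicontinuous on $L^2_1(\mathbf{H}^n)$.

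The genuine gap is the borderline case $s_g=c_{m,n}$. The inequality $\|f\|_{p_{m+n}}^2\le C\bigl(\|\nabla f\|_2^2-\tfrac{(n-1)^2}{4}\|f\|_2^2\bigr)$ does not follow from ``a ground-state substitution followed by a standard weighted Sobolev estimate'': $e^{-(n-1)r/2}$ is not an exact generalized ground state (one has $-\Delta e^{-(n-1)r/2}=\tfrac{(n-1)^2}{4}(2\coth r-1)e^{-(n-1)r/2}$), and even after such a substitution the weighted inequality you need, with no zero-order term on the right-hand side, is precisely the nontrivial Poincar\'e--Sobolev (Hardy--Sobolev--Maz'ya type) inequality of Mancini--Sandeep, inequality (1.2); the paper cites it and identifies the infimum in this case as $a_{m+n}S_{n,p_{m+n}-1}$. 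Similarly, your ODE-asymptotics argument for non-attainment is only a sketch (threshold solutions may also decay like $re^{-(n-1)r/2}$, and one must first reduce to radial minimizers), but, as you note, it can be replaced by citing Mancini--Sandeep's non-existence theorem, which is exactly what the paper does. So the architecture is right, but as written the positivity at $s_g=c_{m,n}$ rests on an unproved assertion and should be replaced by a citation of (or an actual proof of) that inequality.
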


\vspace{.5cm}

In Section 3 and Section 4 we
will consider the case $(M,g) = (S^m , rg_0^m )$, for $r\in [0,1]$,
$m\geq 2$.
The case when $r\in [0,1]$ is of interest because these are the values
that appear in the surgery formula \cite{Ammann}. Note that for
$r\in (0,1]$ we have  $s_{rg_0^m} =(1/r)m(m-1) >c_{m,n}$.
Let $g(r) = g_h^n + rg_0^m $ and denote by $g^n_e$ the Euclidean metric
on $\re^n$. We define
 $Q_{n,m}:[0,1] \longrightarrow \re_{>0}$ by

$$Q_{n,m}(r)=\left\{ \begin{array}{ll}
 Y_{{\bf{H}}^n}({\bf{H}}^n\times S^m,g(r)) &  if\ r>0, \\
                                     Y_{\re^n}(\re^n\times S^m,g^n_e+g_0^m) & if\ r=0.
                                  \end{array}
                                \right.$$

In section 2 we will also show the following:

\begin{Proposition}
$Q_{n,m}$ is a continuous function.
\end{Proposition}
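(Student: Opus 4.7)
The proof naturally splits into continuity on the open interval $(0,1]$ and continuity at the endpoint $r=0$, which require different arguments. For $r \in (0,1]$, I would factor
$$Q_{n,m}(r) = r^{m/(m+n)} V_{g_0^m}^{2/(m+n)}\, F({\bf s}(r)),$$
where ${\bf s}(r) = m(m-1)/r - n(n-1)$ and
$$F(\lambda) = \inf_{f \in L_1^2({\bf H}^n) \setminus \{0\}} \frac{\int_{{\bf H}^n} a_{n+m}|\nabla f|^2 + \lambda f^2\, dv_{g_h^n}}{\|f\|_{p_{n+m}}^2}.$$
For each fixed $f\neq 0$ the quotient is affine in $\lambda$, so $F$ is concave as a pointwise infimum of affine functions. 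By the previous theorem, $F(\lambda) > 0$ for $\lambda > c_{m,n} - n(n-1)$, so $F$ is finite on an open half-line and hence continuous there. Composing with the continuous maps $r \mapsto r^{m/(m+n)}$ and $r \mapsto {\bf s}(r)$ yields continuity of $Q_{n,m}$ on $(0,1]$.

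For continuity at $r=0$, I would exploit the scaling invariance $Y_{c^2 g}(f) = Y_g(f)$ of the Yamabe functional to rewrite, for $r>0$,
$$Q_{n,m}(r) = V_{g_0^m}^{2/(m+n)} \inf_{f} \widetilde I_r(f),\qquad \widetilde I_r(f) = \frac{\int a_{n+m}|\nabla f|^2_{\widetilde g_r} + (m(m-1)-rn(n-1)) f^2\, dv_{\widetilde g_r}}{\|f\|_{p_{n+m}}^2},$$
where $\widetilde g_r = (1/r)g_h^n$ is the hyperbolic metric on ${\bf H}^n$ of sectional curvature $-r$. In geodesic polar coordinates around a fixed origin, $\widetilde g_r = d\rho^2 + r^{-1}\sinh^2(\sqrt r\,\rho)\, d\Omega^2$, which converges smoothly on bounded sets to the Euclidean metric as $r\to 0^+$; combined with $m(m-1) - rn(n-1) \to m(m-1)$, the $r = 0$ limit of these functionals on $L_1^2(\re^n)$ is precisely the Euclidean functional $I_0$ defining $Q_{n,m}(0)/V_{g_0^m}^{2/(m+n)}$. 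Upper semi-continuity at $0$ is then immediate: for any $\phi \in C_c^\infty(\re^n)$ transplanted to ${\bf H}^n$, dominated convergence gives $\widetilde I_r(\phi) \to I_0(\phi)$, and density of $C_c^\infty$ in $L_1^2(\re^n)$ yields $\limsup_{r\to 0^+} Q_{n,m}(r) \leq Q_{n,m}(0)$.

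The main obstacle is lower semi-continuity at $r = 0$. I would take $r_k \to 0^+$ and minimizers $f_k$ of $\widetilde I_{r_k}$, which exist by the previous theorem and which the Mancini--Sandeep uniqueness result allows one to take radial about the chosen origin, normalized by $\|f_k\|_{p_{n+m}} = 1$. The $f_k$ satisfy a subcritical Euler--Lagrange equation whose coefficients converge to those of the Euclidean limit problem, so by elliptic regularity a subsequence converges locally uniformly on $\re^n$ to some $f_0$. The delicate step is to ensure $f_0 \neq 0$: one must rule out loss of mass to infinity (handled by $L^\infty$-decay estimates for radial subcritical solutions, using the uniform Euclidean character of $\widetilde g_r$ on bounded sets) and concentration at the origin (excluded by the subcriticality $p_{n+m} < 2n/(n-2)$, under which concentration would force $\widetilde I_{r_k}(f_k) \to \infty$, contradicting the upper bound already established). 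Fatou's lemma for the numerator, together with convergence of the measures $dv_{\widetilde g_r} \to dv_{g_e}$ on bounded sets, then gives $I_0(f_0) \leq \liminf \widetilde I_{r_k}(f_k)$ with $\|f_0\|_{p_{n+m}} = 1$, whence $Q_{n,m}(0) \leq \liminf Q_{n,m}(r_k)$.
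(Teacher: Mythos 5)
Your argument is correct in outline, but it takes a genuinely different (and, at the crucial point, much heavier) route than the paper. For $r\in(0,1]$ your observation that $F(\lambda)$ is an infimum of functions affine in $\lambda$, hence concave, hence continuous where finite, is a clean alternative to the paper's argument, which instead takes normalized minimizers $f_r$ (with $\|f_r\|_{p_{n+m}}=1$), uses the Poincar\'e inequality and $m(m-1)-c_{m,n}>0$ to get a uniform bound on $\|f_r\|_2^2$, and reads off continuity of $F$ from the affine dependence on $s_{g_r}$; your version has the advantage of not needing existence of minimizers at all. At $r=0$, the upper semicontinuity by transplanting compactly supported Euclidean test functions is essentially the paper's first inequality (quoted from Akutagawa--Florit--Petean, where the test-function argument does not use compactness). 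The real divergence is the lower semicontinuity: the paper does not run any compactness or concentration analysis. It proves (Proposition 4.2) by Euclidean radial symmetrization that for every $f\in C_0^\infty({\bf H}^n)$, viewed on $({\bf H}^n,(1/r)g_h^n)$, one has $\|f_*\|_q=\|f\|_q$ and $\|\nabla f_*\|_2\le\|\nabla f\|_2$ because the isoperimetric profile of hyperbolic space dominates the Euclidean one; this gives the explicit inequality $Q_{n,m}(r)\ge\frac{m(m-1)-rn(n-1)}{m(m-1)}Q_{n,m}(0)$, from which $\liminf_{r\to0^+}Q_{n,m}(r)\ge Q_{n,m}(0)$ is immediate (and which is then reused quantitatively in Section 4).

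If you keep your route, the part you label ``delicate'' is exactly where work remains: you must prove, not just assert, a radial decay estimate that is uniform over the family of metrics $(1/r)g_h^n$ as $r\to0$ (the Strauss-type bound does hold uniformly because the volume element $r^{-(n-1)/2}\sinh^{n-1}(\sqrt r\,\rho)\ge\rho^{n-1}$ dominates the Euclidean one, but this needs to be said and used), and the claim that concentration at the origin ``would force $\widetilde I_{r_k}(f_k)\to\infty$'' needs the actual mechanism, namely H\"older against the critical norm: if a fixed amount of $\|f_k\|_{p_{n+m}}^{p_{n+m}}$ sits in a ball of radius $\epsilon_k\to0$, then $\|f_k\|_{L^{p_{n+m}}(B_{\epsilon_k})}\le\|f_k\|_{L^{p_n}}\,|B_{\epsilon_k}|^{\frac1{p_{n+m}}-\frac1{p_n}}$ forces the $H^1$ norm, hence the numerator, to blow up. With those two points supplied (plus local elliptic estimates for the subcritical Euler--Lagrange equation), your compactness argument closes; but the symmetrization comparison renders all of it unnecessary, which is the main lesson to take from the paper's proof.
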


Note that  $Q_{n,m}(0)$ is computed in
\cite[Theorem 1.4]{Akutagawa}
in terms of the best constants in the Gagliardo-Nirenberg
inequalities (which can be computed numerically).

If $(M^m ,g)$ is a closed Riemannian manifold of
constant scalar curvature $s_g >c_{m,n}$
and  $f$ realizes   $Y_{\bf{H^n}}({\bf{H}}^n\times M^m,g_h^n+g)$,
then $f$ is
a positive smooth solution of the subcritical  Yamabe equation

$$-a_{m+n} \Delta_{g_h^n} f + s_{g^h_n+g}  f = a_{m+n}f^{p_{m+n}-1} $$

\noindent
(of course, $f$ is a minimizer then for any positive constant
$\alpha$, $\alpha f$ is also a minimizer. One obtains a solution
of the previous equation by picking $\alpha$ appropriately).

Due to the symmetries of hyperbolic
space, using symmetrization,
one can see that $f$ is a
radial function (with respect to some fixed point).
Consider the following model for hyperbolic space:

$${\bf H}^n = (\re^n , \sinh^2  (r) \ g_0^{n-1 } + dr^2 ).$$

\noindent
For a radial function $f$ write $f(x)=\varphi (\| x \| )$,
where $ \| x \|$ denotes the distance to the fixed point.
Then $f$ is a solution of the Yamabe equation if
$\varphi : [0, \infty ) \rightarrow \re_{>0}$ solves
the ordinary differential equation:

$$EQ_{\lambda ,n,q}  :  \  \  \  \varphi '' +(n-1) \frac{e^{2t}+1 }{e^{2t} -1}  \varphi ' = \lambda \varphi - \varphi^q $$

\noindent
where $\lambda=s_{g^n_h+g}/a_{n+m}$ and $q=p_{n+m}-1$.

Note that

$$\int_{{\bf H}^n} f^k dv_{g_h^n} =V_{g_0^{n-1}}  \int_0^{\infty} \varphi^k (t) \sinh^{n-1} (t) dt $$

\noindent
(for any $k>0$) and

$$\int_{{\bf H}^n} {\| \nabla f}^2  \| dv_{g_h^n}  =  V_{g_0^{n-1}} \int_0^{\infty} \varphi '^2  \sinh^{n-1} (t) dt .$$

Uniqueness of (positive, finite energy) solutions
of the subcritical Yamabe equation
(or equivalently $EQ_{\lambda ,n,q}$) was proved
by G. Mancini and K. Sandeep in
\cite[Theorem 1.3, Theorem 1.4]{Mancini}.
We will describe the solutions of the ODE in Section 3
to see that one can numerically compute $Q_{n,m} (r)$ for any
fixed $r\in (0,1]$
and use this in Section 4 to prove:

\begin{Theorem} For  $(n,m) =(2,2), (2,3), (3,2) $and $r\in [0,1]$
$Q_{n,m}(r) \geq 0.99 \  Q_{n,m} (0)$.
\end{Theorem}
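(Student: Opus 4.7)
The plan is to reduce the computation of $Q_{n,m}(r)$ for each fixed $r \in (0,1]$ to a one-dimensional shooting problem, evaluate $Q_{n,m}$ numerically on a finite grid in $[0,1]$, and fill in the gaps via Proposition 1.3 together with a quantitative bound on the local variation of $Q_{n,m}$.

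For each $r \in (0,1]$, the scalar curvature $s_{rg_0^m} = m(m-1)/r$ strictly exceeds $c_{m,n}$, so Theorem 1.2 produces a positive minimizer $f \in L_1^2({\bf H}^n)$ for $Q_{n,m}(r)$. Symmetrization in hyperbolic space allows us to take $f(x) = \varphi(\|x\|)$ radial, and $\varphi$ then solves $EQ_{\lambda,n,q}$ with $\lambda = {\bf s}/a_{n+m}$ and $q = p_{n+m}-1$, where ${\bf s} = m(m-1)/r - n(n-1)$. By the Mancini--Sandeep uniqueness theorem, the positive finite-energy solution is unique up to scaling, hence determined (after shooting) by its value $\varphi(0)$ at the origin. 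Once $\varphi$ is known, $Q_{n,m}(r)$ is recovered as
$$
Q_{n,m}(r) = V_g^{2/(m+n)}\,(V_{g_0^{n-1}})^{(p-2)/p}\, \frac{\displaystyle\int_0^\infty \bigl(a_{n+m}(\varphi')^2 + {\bf s}\,\varphi^2\bigr)\sinh^{n-1}(t)\,dt}{\displaystyle\left(\int_0^\infty \varphi^{p}\sinh^{n-1}(t)\,dt\right)^{2/p}},
$$
where $p = p_{n+m}$ and $V_g = r^{m/2}V_{g_0^m}$.

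I would then compute $Q_{n,m}(0)$ from Akutagawa's formula expressing it in terms of the optimal Gagliardo--Nirenberg constant, and carry out the ODE shooting at a sufficiently fine grid $0 < r_1 < \cdots < r_N = 1$. For each grid point one checks that $Q_{n,m}(r_i)/Q_{n,m}(0) \geq 0.99 + \delta$ for a uniform safety margin $\delta > 0$. To extend this pointwise bound to all $r \in [0,1]$, I would use (i) a Lipschitz estimate for $Q_{n,m}$ on intervals bounded away from $0$, obtained by plugging the minimizer for $r_i$ as a test function in the functional for nearby $r$ (the dependence on $r$ enters only through the coefficient ${\bf s}(r) = m(m-1)/r - n(n-1)$ and the prefactor $V_g^{2/(m+n)}$, both analytic in $r>0$); and (ii) near $r=0$, the convergence $Q_{n,m}(r) \to Q_{n,m}(0)$ furnished by Proposition 1.3, refined by a rescaling argument showing that the rescaled minimizers converge to the Euclidean Gagliardo--Nirenberg extremal.

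The main obstacle is turning each step into a mathematical inequality rather than a numerical one: certifying error bounds for the ODE shooting (so that the integration is guaranteed to be within the safety margin $\delta$ of the true value), and exhibiting an explicit modulus of continuity for $Q_{n,m}$ near $r=0$ sharp enough that a finite grid suffices. Once these two ingredients are in place, a grid fine enough that the Lipschitz variation between adjacent $r_i$'s is controlled by $\delta$ closes the argument in each of the three cases $(n,m) \in \{(2,2),(2,3),(3,2)\}$.
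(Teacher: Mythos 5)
Your reduction of each $Q_{n,m}(r)$, $r>0$, to the radial ODE via the Mancini--Sandeep uniqueness theorem is exactly the paper's Section 3, and the overall shape (finite set of numerically certified values plus an argument covering the gaps and the endpoint $r=0$) matches the paper's strategy. But there are two genuine gaps in how you cover the gaps. First, and most importantly, near $r=0$ you invoke Proposition 1.3 (continuity at $0$) ``refined by a rescaling argument,'' and you yourself flag the explicit modulus of continuity at $0$ as an unresolved obstacle --- but that is precisely the crux of the theorem: no finite grid in $(0,1]$ reaches $r=0$, and bare continuity gives no explicit threshold $r_1$ below which $Q_{n,m}(r)\geq 0.99\,Q_{n,m}(0)$ is guaranteed. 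The paper closes this with Proposition 4.2, a quantitative lower bound $Q_{n,m}(r)\geq \frac{m(m-1)-rn(n-1)}{m(m-1)}\,Q_{n,m}(0)$ proved by Euclidean symmetrization of test functions (comparing the isoperimetric profiles of $({\bf H}^n,(1/r)g_h^n)$ and $\re^n$), which immediately certifies the interval $[0,r_1]$ with an explicit $r_1$. Without an ingredient of this kind your argument cannot terminate.

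Second, your mechanism for interpolating between grid points --- plugging the minimizer at $r_i$ into the functional at nearby $r$ --- only bounds $Q_{n,m}(r)$ from \emph{above}; the theorem needs a \emph{lower} bound between grid points. To bound $Q_{n,m}(r)$ below from a computed value you must compare the functionals uniformly over all test functions, which is what the paper's Lemma 4.1 does: for $0<r_0\leq r_1$, $Y_{g(r_1)}(f)\leq (r_1/r_0)^{m/(n+m)}Y_{g(r_0)}(f)$ for every $f$, hence $Q_{n,m}(r_0)\geq (r_0/r_1)^{m/(n+m)}Q_{n,m}(r_1)$ on the whole interval $[r_0,r_1]$ with no Lipschitz constant and no control on the unknown minimizer at $r$. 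This also replaces your uniform grid by the paper's adaptive recursion $s_{i+1}=\bigl(0.99\,Q_{n,m}(0)/Q_{n,m}(s_i)\bigr)^{(n+m)/m}s_i$, iterated until $s_i$ falls below the Proposition 4.2 threshold. (A two-sided Lipschitz estimate could be salvaged using the uniform $L^2$ bounds on minimizers from the proof of Proposition 1.3, but as written your step only gives the wrong direction.) Finally, the certified error bounds for the shooting that you list as an obstacle are what the paper's Section 3 supplies: Lemma 3.1 gives the criterion deciding $\alpha\gtrless\alpha_\lambda$, and the explicit tail estimate via the lower bounds of Ammann--Dahl--Humbert controls the truncation of the $L^p$ norm.
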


It should be true that $Q_{n,m}(r) > Q_{n,m}
(0)$ for $r>0$, but we have not been able to prove it
(the problem is to prove the inequality for $r$ close to 0).
But for any given $0<\mu <1$ and a given pair $(n,m)$ one could
prove that $Q_{n,m}(r) > \mu \  Q_{n,m} (0)$.

\section{  ${\bf{H}}^n$-Yamabe constants}

In this section we will prove Theorem 1.2 and Proposition 1.3.

Recall that

$$\inf_{f\in L_1^2 (\bf{H}^n ) -\{ 0 \} } \frac{\|\nabla f\|^2_2}{\|f\|^2_2} =
\frac{(n-1)^2}{4} .$$

Let

$$Y_{n,m}^s (f)= \frac{\int_{{\bf{H}}^n}a_{n+m}|\nabla f|^2+
(s -n(n-1)) f^2dv_{g_h^n}}{\|f\|_{p_{n+m}}^2} ,$$

\noindent
so that, if $(M,g)$ is a closed Riemannian manifold of volume $V$ and
constant scalar curvature $s$ then

$$Y_{{\bf{H}}^n}
({\bf{H}}^n\times M^m,g_h^n+g) =  V^{\frac{2}{m+n}}
\ \inf_{f\in L_1^2 ({\bf{H}}^n ) -\{ 0 \} }
Y_{n,m}^s (f) .$$

We can rewrite the expression of $Y_{n,m}^s (f)$ as:

$$Y_{n,m}^s (f)= \frac{a_{m+n}}{\|f\|_{p_{n+m}}^2}
\int_{{\bf{H}}^n}|\nabla f|^2+
\left( \frac{s -c_{m,n}}{a_{m+n}}-\frac{(n-1)^2}{4} \right) f^2dv_{g_h^n} .$$

It follows that if $s -c_{m,n} <0$ then there exists $f\in C_0^{\infty}
({\bf{H}}^n )$
such that $Y_{n,m}^s (f) <0 $. For each integer $k$ we can
consider  $f_k \in C_0^{\infty}
({\bf{H}}^n )$ which consists of $k$ disjoint copies of $f$.
Then $Y_{n,m}^s (f_k ) = k^{1-(2/p)} Y_{n,m}^s (f)$ and
so

$$\lim_{k\rightarrow \infty} Y_{n,m}^s (f_k ) =-\infty ,$$

\noindent
proving the last statement of  Theorem 1.2.

If  $s -c_{m,n} \geq 0$ then  $Y_{n,m}^s (f) >0 $ for all $f\in L_1^2
({\bf{H}}^n )$. To prove that the constant is strictly positive
it is enough to consider the case when $s=c_{m,n}$. But

$$ \inf_{f\in L_1^2 ({\bf{H}}^n ) - \{ 0 \} }
Y_{n,m}^{c_{m,n}} (f) =a_{m+n} S_{n,p_{m+n} -1},$$

\noindent
where $ S_{n,p_{m+n} -1}$ is the best constant in the
Poincar\'{e}-Sobolev
inequality proved in \cite[(1.2)]{Mancini}. If the infimum were
achieved in this case then the minimizing function would be a
positive smooth solution in $L_1^2 ({\bf{H}}^n )$  of
$\Delta f + (n-1)^2 /4 +f^{p-1} =0$; but such a solution does not exist by
\cite[Theorem 1.1]{Mancini}. In case $s>c_{m,n}$ then bounds
on $Y_{n,m}^s (f)$ give bounds on the $L_1^2$-norm of $f$, so
minimizing sequences are bounded in $L_1^2 ( {\bf{H}}^n )$. Then
by the usual techniques one can show convergence to a smooth positive
function in  $L_1^2 ( {\bf{H}}^n )$. This is explicitly done in
\cite[Theorem 5.1]{Mancini}.

This concludes the proof of Theorem 1.2.

\vspace{.5cm}

Proof of Proposition 1.3:

Note that for $r>0$, $Q_{n,m} (r)
= Y_{{\bf{H}}^n}({\bf{H}}^n\times S^m,(1/r) g_h^n +g_0^m )$ and
continuity at $0$ means that

$$\lim_{T\rightarrow \infty}  Y_{{\bf{H}}^n}({\bf{H}}^n\times S^m,T
g_h^n +g_0^m ) =  Y_{\re^n}(\re^n\times S^m,g^n_e+g_0^m) .$$

\noindent
If we had a closed Riemannian manifold instead of hyperbolic space,
then we would be in the situation of \cite[Theorem 1.1]{Akutagawa}.
As in \cite{Akutagawa} one has to prove

$$\limsup_{T\rightarrow \infty}  Y_{{\bf{H}}^n}({\bf{H}}^n\times S^m,T
g_h^n +g_0^m ) \leq   Y_{\re^n}(\re^n\times S^m,g^n_e+g_0^m) ,$$

\noindent
and

$$\liminf_{T\rightarrow \infty}  Y_{{\bf{H}}^n}({\bf{H}}^n\times S^m,T
g_h^n +g_0^m ) \geq  Y_{\re^n}(\re^n\times S^m,g^n_e+g_0^m) .$$

\noindent
The proof of the first inequality given in \cite{Akutagawa} does not
use compactness and works in our situation. The second inequality is
actually very simple in our case. It follows for instance from the
Proposition 4.2 in Section 4 of this article.

Now consider  $r\in (0,1]$.

$$Q_{n,m} (r) = r^{\frac{m}{n+m}}Vol_{g_0^m}^{\frac{2}{n+m}}
\inf_{f\in L_1^2  ({\bf{H}}^n ) - \{ 0 \} } \frac{a_{n+m}\|\nabla
  f\|_2^2+ s_{g_r} \|f\|_2^2}{\|f\|_{p_{n+m}}^2} $$

Let

$$F(r) = \inf_{f\in L_1^2  ({\bf{H}}^n ) - \{ 0 \} } \frac{a_{n+m}\|\nabla
  f\|_2^2+ s_{g_r} \|f\|_2^2}{\|f\|_{p_{n+m}}^2} .$$

It is clear that $F(r)$ is uniformly bounded in any
interval $[r_0 ,1]$, for $r_0 >0$. Let $f_r$ be a minimizer
for $F(r)$ (i.e. $f_r$ is a minimizer for $Q_{n,m} (r)$). We
can normalize it to have $\|f_r \|_{p_{n+m}} =1$.
Then

$$F(r) =
a_{n+m} \|\nabla
  f_r \|_2^2+ s_{g_r} \| f_r \|_2^2 \geq
\left( \frac{a_{n+m} (n-1)^2}{4} + s_{g_r} \right)  \|f_r \|_2^2 $$

Since

$$\frac{a_{n+m} (n-1)^2}{4} + s_{g_r} \geq \frac{a_{n+m} (n-1)^2}{4}
+m(m-1)
-n(n-1) $$

$$= m(m-1) - c_{m,n} >0,$$

\noindent
it follows that  $\|f_r \|_2^2$ is uniformly bounded. Then
$F$ is clearly continuous at any
$r>0$ and so $Q_{n,m}$ is continuous.

\section{Computing $Y_{{\bf{H}}^n}({\bf{H}}^n\times
S^m,g(r))$ for $r\in (0,1]$}

Let $f_r$ be a function that achieves
$ Q_{n,m}(r) = Y_{{\bf{H}}^n}({\bf{H}}^n\times
S^m,g(r))$, where we call $g(r)=  g^n_h+rg^m_0$
and $r\in (0,1]$.
Then (after normalizing it appropriately)
$f_r(x)=\varphi_r (\| x \| )$ where $\varphi_r$
is a solution of
$EQ_{\lambda ,n,q}$ with $\lambda = \lambda (r) =
\frac{-n(n-1)+r^{-1}m(m-1)}{a_{n+m}}$
and $q=p_{n+m}-1$.

Then

$$ Q_{n,m} (r) =  Y_{g(r)}(f_r)=r^{\frac{m}{n+m}}Vol_{g_0^m}^{\frac{2}{n+m}}
\frac{a_{n+m}\|\nabla
  f_r\|_2^2+s_r\|f_r\|_2^2}{\|f_r\|_{p_{n+m}}^2} $$

$$=a_{n+m} r^{\frac{m}{n+m}}V_{g_0^m}^{\frac{2}{n+m}}\|f_r\|_{p_{n+m}}^{\frac{4}{n+m-2}}$$

\noindent
(where all the norms are taken considering $f_r$ as a function on $
{\bf{H}}^n$).

In this section $r$ (and $\lambda$) will be fixed and we want to show
that we can effectively numerically compute   $Q_{n,m} (r)$, which
means that we can compute numerically $\|f_r\|_{p_{n+m}}$.

Let  $\varphi$ be the  solution of $EQ_{\lambda ,n,q}$
with $\varphi(0) = \alpha
>0$ and $\varphi'(0)=0$. Of course $\varphi$ depends only on $\alpha$
and we will use the notation $\varphi = \varphi_{\alpha}$ when we
want to make explicit this dependence. We will use the notation
$f_{\alpha} (x) = \varphi_{\alpha} (\| x \| )$.

We are interested in the cases
$\lambda \in [a_{m+n}^{-1} (m(m-1)-n(n-1)), \infty )$.
The cases when $\lambda >0$ have some qualitative differences to the
cases when  $\lambda \leq 0$.

Consider the energy function associated with  $\varphi$:

$$E=E(\varphi ):=(1/2)  (\varphi ')^2 -\lambda \varphi^2 /2  + \varphi^{q+1} /(q+1) .$$

Then

$$E' (t) = -(n-1) \frac{e^{2t}+1 }{e^{2t} -1}  (\varphi '(t) )^2 \leq 0 .$$

If a solution $\varphi$ intersects the $t$ axis,
let $b(\varphi )$ be the first point such that $\varphi (b(\varphi )) =0$.
If $\varphi$ does not cross the $t$
axis,  we define $b(\varphi)=\infty$. Note that
in the first case $\varphi ' (b_{\varphi}) <0$
and therefore $E(b(\varphi))>0$. We are  going to consider the function
$\varphi$  defined in $[0,  b(\varphi ) ]$.

We divide the  solutions $\varphi$ into these families:

\begin{itemize}
\item $N = \{\mbox{Solutions for which}\ b(\varphi ) < \infty \}$.

\item $P =\{ \mbox{Solutions which stay positive but are not in}\ L^{q+1}\}$.

\item $G =\{ \mbox{Solutions for which}\ b(\varphi ) = \infty\ \mbox{and are in}\ L^{q+1}\}$.
\end{itemize}

The minimizing solution belongs to $G$.
It is proved in \cite[Theorem 1.2]{Mancini} that there exists
exactly one such solution.
If the initial value of this solution is $\varphi(0)=\alpha_{
\lambda}$, then they also show \cite[Corollary 4.6]{Mancini} that
if $\alpha < \alpha_{\lambda}$ then $\varphi_{\alpha} \in P$ and
if $\alpha > \alpha_{\lambda}$ then $\varphi_{\alpha} \in N$.
Note that we are using the notation $f_r = f_{\alpha_{\lambda (r)}}$.

\vspace{.5cm}

To see that one can compute $Q_{n,m} (r)$ numerically we will argue
that we can numerically approximate the value of
$\alpha_{\lambda (r)}$ and that for any given $\epsilon >0$
we can explicitly find $t>0$ such that

$$ \| {f_r}_{|_{ \{ \| x \| \} >t}} \|_p < \varepsilon .$$

\vspace{.5cm}

If for some $\alpha$ the solution
$ \varphi_{\alpha}$ hits 0, then $\alpha > \alpha_{\lambda}$. In case
$ \varphi_{\alpha}$ stays positive up to some large $T$ we consider
the following:

\begin{Lemma} Fix $\lambda$, let $\alpha > \alpha_{\lambda}$
and let $f_{\alpha}$, $f_{{\alpha}_{\lambda}}$ be the corresponding
functions in hyperbolic space. Then ${\| f_{\alpha}  \|}_p \geq
  {\| f_{\alpha_{\lambda}}  \|}_p  $. Moreover, if $\alpha_i$,
$i=1,2$ are such that $\infty > b_{\alpha_1 } > b_{\alpha_2 }$
then   ${\| f_{\alpha_2}  \|}_p \geq
  {\| f_{\alpha_{1}}  \|}_p  $.

\end{Lemma}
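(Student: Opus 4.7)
The plan is to reduce both inequalities to a single Rayleigh--Pohozaev identity together with the variational characterization of minimizers: globally for the first inequality, and on a bounded geodesic ball (a Dirichlet problem) for the second. The key identity, which I would verify first, is that for every $f_\alpha$ in the lemma (including $f_{\alpha_\lambda}$, and with $f_\alpha$ extended by zero to ${\bf H}^n$ when $b_\alpha<\infty$),
$$Y_{g(r)}(f_\alpha) = a_{n+m}\, r^{m/(n+m)}\, V_{g_0^m}^{2/(n+m)}\, \|f_\alpha\|_p^{\,p-2}. \qquad (\star)$$
This follows from multiplying the subcritical Yamabe equation satisfied by $f_\alpha$ on $B(0,b_\alpha)$ by $f_\alpha$ and integrating: since $\varphi_\alpha(b_\alpha)=0$, the boundary term in the integration by parts vanishes and the numerator of the Yamabe functional equals $a_{n+m}\|f_\alpha\|_p^{\,p}$.

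The first inequality is then immediate: the extension by zero of $f_\alpha$ is an admissible test function for the variational problem defining $Q_{n,m}(r) = Y_{g(r)}(f_{\alpha_\lambda})$, so $Y_{g(r)}(f_{\alpha_\lambda}) \le Y_{g(r)}(f_\alpha)$, and $(\star)$ converts this into $\|f_{\alpha_\lambda}\|_p \le \|f_\alpha\|_p$.

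For the second inequality I would consider the Dirichlet infimum
$$\Lambda(R) = \inf\bigl\{Y_{g(r)}(f) : f \in L_1^2({\bf H}^n),\ \mathrm{supp}(f) \subset \overline{B(0,R)}\bigr\}.$$
Because $p_{n+m} < p_n$, the Sobolev embedding $L_1^2 \hookrightarrow L^p$ is compact on the bounded ball $B(0,R)$, so $\Lambda(R)$ is attained by a positive smooth minimizer; hyperbolic symmetrization makes this minimizer radial, hence of the form $\varphi_{\alpha^*}$ with $b_{\alpha^*} = R$, to which $(\star)$ applies. Taking $R = b_{\alpha_1}$ and using $f_{\alpha_2}$ (extended by zero) as a competitor yields $Y_{g(r)}(\varphi_{\alpha^*}) \le Y_{g(r)}(f_{\alpha_2})$, which by $(\star)$ gives $\|\varphi_{\alpha^*}\|_p \le \|f_{\alpha_2}\|_p$.

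The remaining, and I expect most delicate, step is to identify $\varphi_{\alpha^*}$ with $f_{\alpha_1}$; this amounts to injectivity of the shooting map $\alpha \mapsto b_\alpha$ on $(\alpha_\lambda,\infty)$, i.e., uniqueness of the positive radial solution of $EQ_{\lambda,n,q}$ with $\varphi'(0) = 0$ having a prescribed first zero. I would extract this either from the shooting and monotonicity arguments underlying the Mancini--Sandeep uniqueness theorem in \cite{Mancini}, or, failing that, from a direct ODE analysis exploiting the monotonicity of the energy $E$. Once this uniqueness is in hand, $\alpha^* = \alpha_1$ and $\|f_{\alpha_1}\|_p \le \|f_{\alpha_2}\|_p$ follows.
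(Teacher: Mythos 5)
Your proposal is correct and follows essentially the same route as the paper: restrict the Yamabe functional to functions supported in a geodesic ball, note the (radial, positive) Dirichlet minimizer is a solution of $EQ_{\lambda,n,q}$ vanishing exactly at the boundary, identify it with $f_{\alpha_1}$ by uniqueness, and compare $L^p$-norms through the identity $Y_{g(r)}(f_\alpha)=a_{n+m}r^{m/(n+m)}V_{g_0^m}^{2/(n+m)}\|f_\alpha\|_p^{p-2}$. The uniqueness step you flag as delicate is exactly \cite[Proposition 4.4]{Mancini}, which is what the paper invokes, so no further ODE analysis is needed; your direct test-function argument for the first inequality is a harmless simplification of the paper's.
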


\begin{proof} Restrict the Yamabe functional $Y_{g(r)}$
to smooth functions with
support in closed Riemannian ball $B(0,T)$.
It can be seen that the infimum of the
functional is achieved by a smooth solution of $EQ_{\lambda ,n,q}$
which is positive in $[0,T)$ and vanishes at $T$. But there is
exactly one such solution by \cite[Proposition 4.4]{Mancini}.  It
follows that if $b_{\alpha} =T$ then  $f_{\alpha}$ is the minimizer.
The lemma follows since the infimum is
$r^{\frac{m}{n+m}}V_{g_0^m}^{\frac{2}{n+m}}\|f_{\alpha} \|_p^{\frac{4}{n+m-2}}$.

\end{proof}

Then for some given value of $\alpha$ one can numerically compute
the corresponding solution $\varphi_{\alpha}$ and decide if
$\alpha > \alpha_{\lambda}$ (in case it hits 0 at some point)
or $\alpha < \alpha_{\lambda}$ (in case its $L^p$ norm becomes bigger
than the $L^p$-norm of a solution in $N$).

In the case when $\lambda >0$ one can do it a little easier since
solutions of the equation which are in $P$ will have positive local
minimums.

\vspace{.3cm}

Finally, one can see that for a given $\epsilon >0$ one can
find $t$ such that
$ \| {f_r}_{|_{ \{ \| x \| \} >t}} \|_p < \varepsilon .$

Note first that there are  known explicit
positive lower bounds for ${\| f_r \|}_p$: this is
of course equivalent to  have lower bounds for
$Q_{n,m} (r)$ and
in \cite[Theorem 4.1, Corollary 4.2]{Ammann2}
the authors give   lower bounds for  $Y({\bf{H}}^n\times
S^m,g(r))$ (and of course
 $Y({\bf{H}}^n\times
S^m,g(r)) \leq Q_{n,m} (r)$).

Now

$$\frac{{\| f_r \|}_p^2  Q_{n,m} (r)}{r^{\frac{m}{m+n}} V_{g_0^m}^{2/(m+n)}}=
\int_{{\bf{H}}^n}a_{n+m}|\nabla f_r |^2+
\left( \frac{m(m-1)}{r} -n(n-1)\right) f_r^2dv_{g_h^n} $$

$$
\geq \left( a_{m+n} \frac{(n-1)^2}{4} + m(m-1)-n(n-1) \right)
{\| f_r \|}_2^2
.$$

Let

$$D_{m,n} = \frac{m+n-1}{m+n-2} (n-1)^2  + m(m-1)-n(n-1)>0 .$$

If $\varphi_r (t) < \varepsilon$ then
$f^p_r (x) < \varepsilon^{p-2} f^2_r (x)$ for all $x$ such that
 $\| x \| >t$. Therefore

$$\int_{\{ \| x \| > t \} } f^p_r \leq \varepsilon^{p-2} {\|
  f_r\|}_2^2 <\varepsilon^{p-2}  \frac{{\| f_r \|}_p^2
  Q_{n,m}(r)}{r^{\frac{m}{m+n}
 V_{g_0^m}^{2/(m+n)}} D_{m,n}} \leq K(\varepsilon ),$$

\noindent
where $K(\varepsilon )$ is some explicit function of $\varepsilon$
that goes to 0 with $\varepsilon$.

Upper bounds for  ${\| f_r \| }_p$ are easy to obtain
(for instance using Lemma 3.1) and this implies that given any
positive $\varepsilon$, since $\varphi_r$ is decreasing, one can
explicitly find $t$ such that $\varphi_r (t) <\varepsilon$.

Then for any given $\epsilon >0$ one can explicitly find $t$
such that the $L^p$-norm of the restriction of $f_r$ to
$\{ \| x \| >t \}$ is less than $\epsilon$.

This should make it clear  that  ${\| f_r \| }_p$
can be effectively computed numerically.

\vspace{.5cm}

To finish our description
we show  examples in each case $\lambda \leq 0$
and $\lambda >0$.

\subsection{ODE for $\lambda \leq 0$}
In this case if $t_0$ is a local minimum of a solution
$\varphi$ then $\varphi (t_0 ) <0$
and in case $t_0$ is a local maximum then  $\varphi (t_0 ) >0$.

If for some initial value the solution hits 0 we know that it belongs
to $N$. Solutions in $P$ are always decreasing and to decide if
a solution belongs to $P$ one has to apply Lemma 3.1.

The following graphic shows the  solutions of the equation
$EQ_{\lambda}$ with parameters $\lambda=-3/32$ and $q=7/3$
(which correspond to $m+n=5$ and ${\bf s}= -1/2$)
with initial condition $\varphi(0)=0.5$, $\varphi(0)=0.9$,
$\varphi(0)=1.2$, $\varphi(0)=1.9$ and $\varphi(0)=3$ respectively.
\begin{center}
 \includegraphics[scale=0.5]{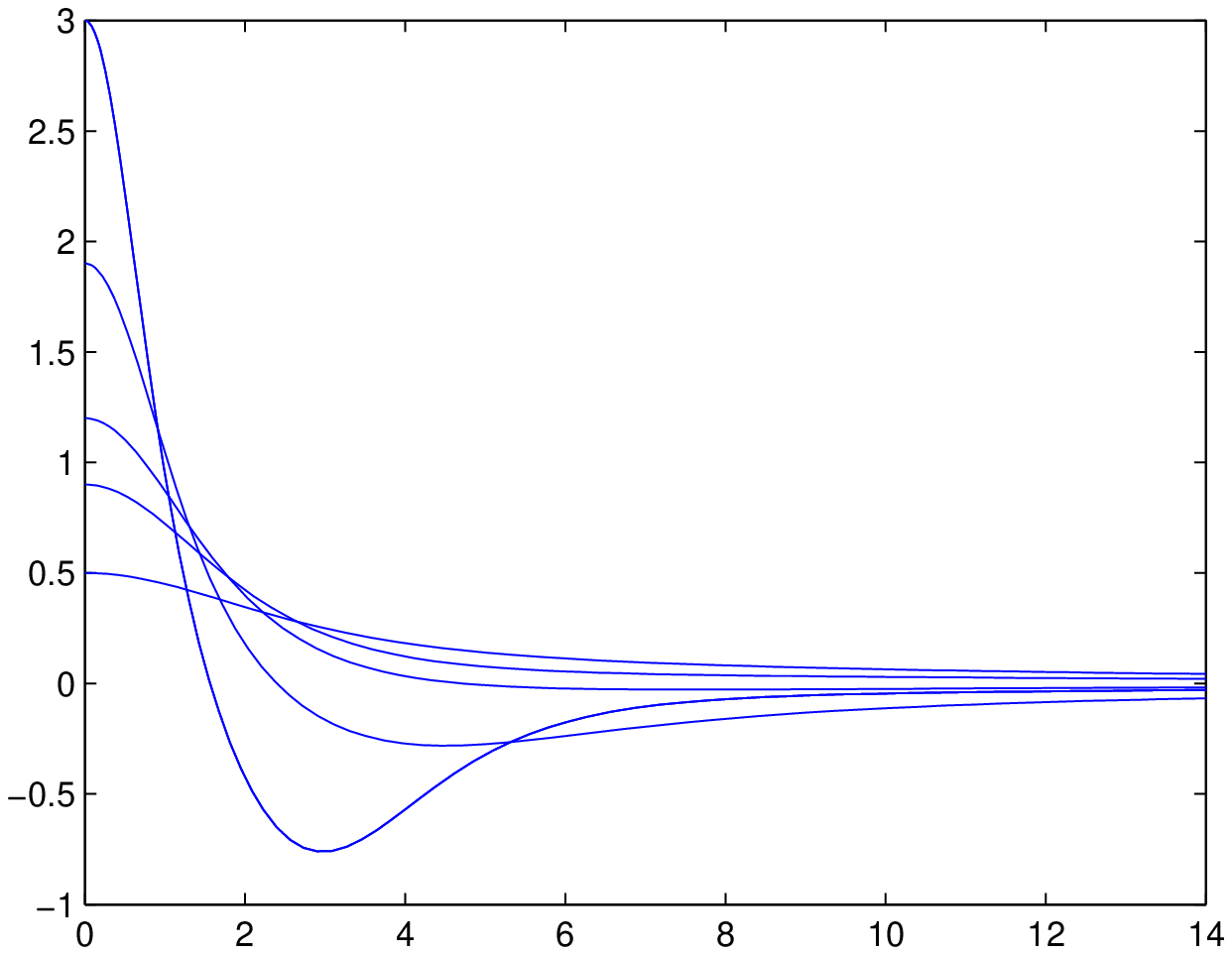}\\
\end{center}

\subsection{ODE for $\lambda  > 0$}
It is equivalent to solve

$$EQ_{\lambda}  : \ \  \  \  \  \varphi '' +(n-1) \frac{e^{2t}+1 }{e^{2t} -1}  \varphi ' = \lambda (\varphi - \varphi^q ).$$

We normalize it in this way so we always have the constant solutions 0 and 1.

 Note that if $\varphi \in N$ then $E(b(\varphi))>0$.

Since $E$ is a decreasing function, the solutions $\varphi$ are bounded.

Suppose that $t_0$ is a critical point of $\varphi$.
Then $\varphi (t_0 )<1$ if $t_0$ is a local
minimum and $\varphi (t_0) >1$ if it is a local maximum
(we are only considering $\varphi$ defined where it stays positive).
If $t_0$ is a local minimum of $\varphi$
then $E(t_0 )<0$ and $\varphi \in P \cup G$.

Now suppose that $\varphi $ is always positive and 0
is a limit point of $\varphi$. Then it follows
that $E(\infty ) = \lim_{t\rightarrow \infty} E(t)=0$.
Therefore $\varphi$ cannot have any local minimum
and $\varphi$ must be monotone decreasing (to 0).
So if $\varphi$ has a local minimum them $\varphi \in P$

The graphic below shows the solutions of the equation
$EQ_{\lambda}$ with parameters $\lambda=15/8$, $q=7/3$
(which correspond to $m+n=5$ and ${\bf s}=10$)
and with initial condition $\varphi(0)=0.3$, $\varphi(0)=2.5$
and  $\varphi(0)=2.8$ respectively.

\begin{center}

  \includegraphics[scale=0.5]{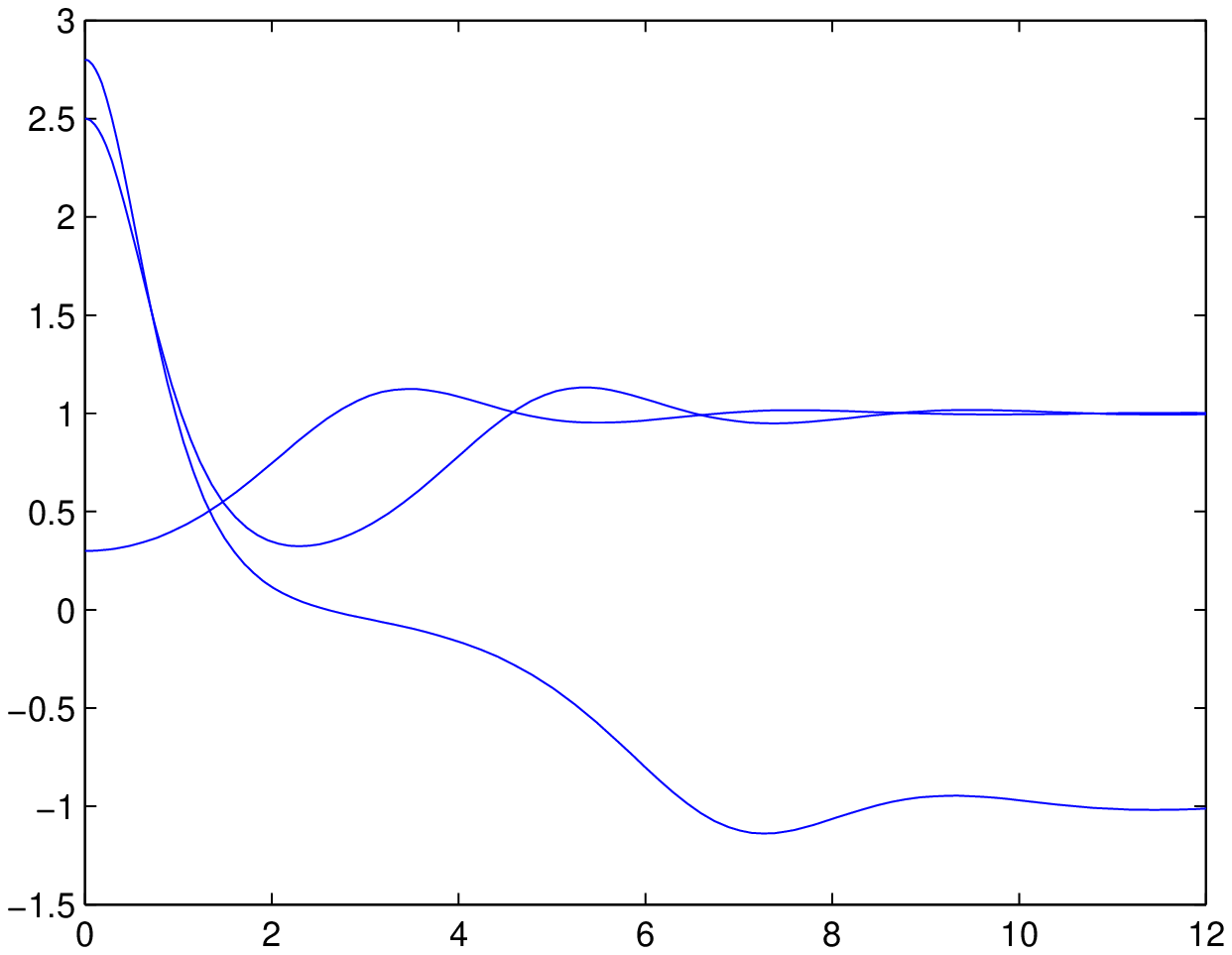}\\

\end{center}

\section{Numerical computations: proof of Theorem 1.4}

We want to estimate $Q_{n,m} (r)$ for $r\in [0,1]$.
Recall that we denote by $g(r)$ the metric $g_h^n+rg^m_0$.
Note that the product $({\bf{H}}^n\times S^m,g(r))$ is conformal
(by a constant, $1/r$) to $({\bf{H}}^n_r \times S^m,g^n_{h r}+g^m_0)$
where $g^n_{h r}$ is the hyperbolic metric of constant curvature
$-r$.  Therefore
$Y_{{\bf{H}}^n}({\bf{H}}^n\times S^m,g(r))=
Y_{{\bf{H}}^n_r}({\bf{H}}^n_r\times S^m,g^n_{h r}+g^m_0)$.
We proved that $Q_{n,m} (r)$ is continuous.

Recall also that $Q_{n,m} (1) =
Y_{{\bf{H}}^n}({\bf{H}}^n\times S^m,g(1)) =
Y({\bf{H}}^n\times S^m,g(1)) =Y(S^{n+m})$,
as was noted in \cite[Proposition 3.1]{Ammann}.
 $Q_{n,m} (0) = Y_{\re^n}(\re^n\times S^m,g^n_e+g_0^m)$
was computed in \cite{Akutagawa} and $$Q_{n,m} (0)
< Q_{n,m} (1).$$

To prove Theorem 1.4 we will use two simple results:

The following observation is a simpler case of \cite[Lemma 3.7]{Ammann}.

\begin{Lemma}\label{Y_H}
 Let $0<r_0\leq r_1$, then
 $$Y_{{\bf{H}}^n}({\bf{H}}^n\times S^m,g(r_1))\leq
(\frac{r_1}{r_0})^{\frac{m}{n+m}}Y_{{\bf{H}}^n}({\bf{H}}^n\times S^m,g(r_0)).$$
\end{Lemma}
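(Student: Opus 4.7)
The plan is a direct monotonicity/test-function comparison based on the formula for $Y_{{\bf{H}}^n}({\bf{H}}^n\times S^m,g(r))$ displayed in the introduction. Using $V_{rg_0^m}= r^{m/2} V_{g_0^m}$ and the fact that the scalar curvature of $g(r)$ is ${\bf s}(r) = r^{-1}m(m-1) - n(n-1)$, I would rewrite
$$Y_{{\bf{H}}^n}({\bf{H}}^n\times S^m,g(r)) = r^{\frac{m}{n+m}} V_{g_0^m}^{\frac{2}{n+m}}\, F(r),$$
with
$$F(r) := \inf_{f\in L_1^2({\bf{H}}^n)\setminus\{0\}} \frac{\int_{{\bf{H}}^n} a_{n+m}|\nabla f|^2 + {\bf s}(r)\, f^2\, dv_{g_h^n}}{\|f\|_{p_{n+m}}^2}.$$

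The key observation is that ${\bf s}(r)$ is a strictly decreasing function of $r>0$, so $0 < r_0 \leq r_1$ gives ${\bf s}(r_1) \leq {\bf s}(r_0)$. For every fixed $f$ in the admissible class, the Rayleigh quotient defining $F$ with parameter ${\bf s}(r_1)$ is therefore no larger than the one with ${\bf s}(r_0)$; since the admissible class $L_1^2({\bf{H}}^n)\setminus\{0\}$ itself does not depend on $r$, taking the infimum preserves the inequality and yields $F(r_1) \leq F(r_0)$. Multiplying by the positive factor $r_1^{m/(n+m)} V_{g_0^m}^{2/(n+m)}$ and writing this as $(r_1/r_0)^{m/(n+m)}$ times $r_0^{m/(n+m)} V_{g_0^m}^{2/(n+m)}$, the right-hand side reassembles into $(r_1/r_0)^{m/(n+m)}\, Y_{{\bf{H}}^n}({\bf{H}}^n\times S^m,g(r_0))$, which is exactly the claim.

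There is no serious obstacle here. The two minor things to check are that the Sobolev class over which one takes the infimum is the same for both parameters (it is, being a space of functions on ${\bf{H}}^n$ alone), and that the sphere volume factor $V_{g_0^m}$ appears identically on both sides (it does, since the metric on $S^m$ is only being rescaled). By Theorem 1.2 both Yamabe constants are strictly positive whenever $r_i^{-1} m(m-1) > c_{m,n}$, so the manipulations are well-defined in that range; and since $(r_1/r_0)^{m/(n+m)}$ is positive, the inequality remains formally valid in general. Notably, no minimizer is invoked — this is purely a comparison of Rayleigh quotients that share the same test-function space.
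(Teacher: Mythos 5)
Your proposal is correct and is essentially the paper's own argument: both compare the functional test function by test function (no minimizer), using that the scalar curvature of $g(r)$ decreases in $r$ while the volume factor $r^{m/2}$ produces the exponent $\frac{m}{n+m}$, and then pass to the infimum. The only difference is cosmetic — you factor out $r^{\frac{m}{n+m}}V_{g_0^m}^{\frac{2}{n+m}}$ first and compare $F(r_1)\leq F(r_0)$, whereas the paper compares $Y_{g(r_1)}(f)\leq (r_1/r_0)^{\frac{m}{n+m}}Y_{g(r_0)}(f)$ directly.
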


\begin{proof} We have  that  $s_{g(r_1 )}\leq s_{g(r_0 )}$,
$dv_{g(r_0)}=(\frac{r_0}{r_1})^{\frac{m}{2}}dv_{g(r_1)}$
and  $\| \nabla f\|^2_{g(r_1)}=\|\nabla f\|^2_{g(r_0)}$
for any $f\in L^2_1({\bf{H}^n})$.
Then $Y_{g^n_h+r_1g_0^m}(f)\leq (\frac{r_1}{r_0})^{\frac{m}{n+m}}Y_{g^n_h+r_0g_0^m}(f)$ for any
$f\in L^2_1({\bf{H}^n})$  and the Lemma follows.
\end{proof}

The other simple result we will use is the following proposition. It
is proved in a more general situation in \cite[Corollary
3.3]{Ammann2}: we give a short proof of this simpler case.

\begin{Proposition}
For any small $r>0$, $Q_{n,m}(r) \geq \frac{m(m-1)-rn(n-1)}{m(m-1)}
Q_{n,m}(0) $.
\end{Proposition}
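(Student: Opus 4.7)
The plan is to work with the conformally rescaled product $({\bf H}^n_r\times S^m,\,g^n_{h r}+g_0^m)$, recalled at the start of Section 4, whose scalar curvature is $c:=m(m-1)-rn(n-1)$. Writing $C:=m(m-1)$, the quantity $c/C$ is exactly the ratio appearing in the proposition, and
\[
Q_{n,m}(r)/V_{g_0^m}^{2/(n+m)}=\inf_{f} R_r(f),\qquad R_r(f):=\frac{a_{n+m}\|\nabla f\|^2_{g^n_{hr}}+c\,\|f\|^2_{g^n_{hr}}}{\|f\|^2_{p_{n+m},\,g^n_{hr}}}.
\]

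First I would carry out an algebraic step. Since $c/C\leq 1$, the inequality $(1-c/C)\,a_{n+m}|\nabla f|^2\geq 0$ yields the pointwise bound
\[
a_{n+m}|\nabla f|^2+c\,f^2\;\geq\;\frac{c}{C}\bigl[a_{n+m}|\nabla f|^2+C\,f^2\bigr].
\]
Integrating against $dv_{g^n_{hr}}$ and dividing by $\|f\|^2_{p_{n+m},\,g^n_{hr}}$ gives $R_r(f)\geq (c/C)\,\widetilde R_r(f)$, where $\widetilde R_r$ is $R_r$ with $c$ replaced by $C$. The proposition therefore reduces to the comparison
\[
(\ast)\qquad \inf_{f\in L_1^2({\bf H}^n_r)\setminus\{0\}}\widetilde R_r(f)\;\geq\;\frac{Q_{n,m}(0)}{V_{g_0^m}^{2/(n+m)}}
\]
for all sufficiently small $r>0$.

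For $(\ast)$, by Schwarz symmetrization on hyperbolic space the infimum can be taken over radial decreasing profiles $\varphi(t)$. Transplanting such a profile to $\re^n$ via the exponential map at the base point produces $\widetilde f(x):=\varphi(|x|)$, for which $R_0(\widetilde f)\geq Q_{n,m}(0)/V_{g_0^m}^{2/(n+m)}$, where $R_0$ is the analogous Euclidean functional. It suffices then to prove $\widetilde R_r(f)\geq R_0(\widetilde f)$. In polar coordinates the hyperbolic volume weight is $w_r(t):=(\sinh(\sqrt{r}\,t)/\sqrt{r})^{n-1}$ and the Euclidean one is $t^{n-1}$, with $h(t):=w_r(t)/t^{n-1}\geq 1$. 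Cross-multiplying, the desired inequality becomes the weighted moment comparison
\[
\frac{\int_0^\infty[a_{n+m}(\varphi')^2+C\varphi^2]\,h\,t^{n-1}dt}{\int_0^\infty[a_{n+m}(\varphi')^2+C\varphi^2]\,t^{n-1}dt}\;\geq\;\left(\frac{\int_0^\infty \varphi^{p_{n+m}}\,h\,t^{n-1}dt}{\int_0^\infty \varphi^{p_{n+m}}\,t^{n-1}dt}\right)^{2/p_{n+m}}.
\]
Since $h\geq 1$ and $2/p_{n+m}<1$ force the right-hand expectation to the power $2/p_{n+m}$ to be bounded above by the expectation itself, this reduces to comparing linear expectations, which one obtains from an FKG/Chebyshev correlation inequality using that $\varphi^{p_{n+m}-2}$ is decreasing in $t$ while $h$ is increasing in $t$.

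The main obstacle is the gradient contribution to $(\ast)$: the factor $(\varphi')^2$ is typically not monotone in $t$, so the correlation argument directly handles only the $C\varphi^2$ portion of the numerator. I expect to circumvent this by using the Euler--Lagrange equation for a minimizer of $\widetilde R_r$ to re-express $\int(\varphi')^2 w_r\,dt$ as an algebraic combination of $\int\varphi^2 w_r\,dt$ and $\int\varphi^{p_{n+m}}w_r\,dt$, reducing the whole estimate to the monotone case; alternatively, a stochastic-dominance argument comparing the measures $[a_{n+m}(\varphi')^2+C\varphi^2]\,t^{n-1}dt$ and $\varphi^{p_{n+m}}\,t^{n-1}dt$ should suffice, with the ``small $r$'' hypothesis used only to guarantee that a radial decreasing minimizer exists (via Theorem 1.2 and the convergence of minimizers established in Proposition 1.3).
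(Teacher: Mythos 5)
Your opening reduction is fine and coincides with the paper's: factoring the zeroth-order coefficient $c=m(m-1)-rn(n-1)$ as $\frac{c}{C}\cdot C$ with $C=m(m-1)$ and using $a_{n+m}|\nabla f|^2\geq \frac{c}{C}a_{n+m}|\nabla f|^2$ is exactly how the ratio in the statement appears. The problem is the remaining comparison $(\ast)$, which is where all the content lies, and your argument for it is not complete. Transplanting the hyperbolic radial profile $\varphi$ unchanged to $\re^n$ changes every integral by the weight $h(t)=w_r(t)/t^{n-1}$, and after your (correct) observations that $D_h/D_1\geq 1$ with $2/p_{n+m}<1$, and the Chebyshev/FKG bound handling the $C\varphi^2$ part, what is left is precisely
$$\frac{\int_0^\infty (\varphi ')^2\,h\,t^{n-1}\,dt}{\int_0^\infty (\varphi ')^2\,t^{n-1}\,dt}\;\geq\;\frac{\int_0^\infty \varphi^2\,h\,t^{n-1}\,dt}{\int_0^\infty \varphi^2\,t^{n-1}\,dt},$$
i.e.\ that the $(\varphi ')^2$-weighted average of the increasing weight $h$ dominates the $\varphi^2$-weighted one. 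This is not a consequence of any monotonicity of $\varphi$ (the gradient density need not be concentrated farther out than $\varphi^2 t^{n-1}dt$ for a general decreasing profile), and neither of your proposed fixes is carried out: the Euler--Lagrange identity for a minimizer of $\widetilde R_r$ only relates the integrals $\int(\varphi ')^2 w_r\,dt$, $\int\varphi^2 w_r\,dt$, $\int\varphi^{p_{n+m}}w_r\,dt$ taken against the \emph{hyperbolic} weight, so it gives no control on the Euclidean-weight integrals appearing on the other side of $(\ast)$; and the ``stochastic dominance'' of the two measures is exactly the statement that needs proof. So the crucial step is asserted, not proved, and it is not clear it holds for all admissible profiles.

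By contrast, the paper's proof sidesteps this entirely by rearranging in the opposite direction: given any test function $f$ on $({\bf H}^n,(1/r)g_h^n)$ it takes the Euclidean (Schwarz) symmetrization $f_*$ on $\re^n$ with the same distribution function. Then \emph{all} $L^q$ norms are preserved exactly, and $\|\nabla f\|_2\geq\|\nabla f_*\|_2$ because the isoperimetric profile of $({\bf H}^n,(1/r)g_h^n)$ dominates the Euclidean one (coarea argument). Combined with the factoring step this yields the proposition for every test function at once, with no minimizer, no ODE analysis and no weighted-moment comparison, and for every $r$ with $m(m-1)-rn(n-1)\geq 0$. If you want to salvage your transplant approach you must actually prove the displayed gradient-average inequality (perhaps for the hyperbolic minimizer, using its decay), but the rearrangement route is both shorter and strictly more robust.
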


\begin{proof}For $r>0$, $Q_{n,m}(r)= Y_{{\bf H}^n} ({\bf H}^n \times S^m ,
  (1/r) g_h^n + g_0^m )$. Given any non-negative function $f\in C_0^{\infty}
({\bf H}^n )$, considered as a function in $({\bf H}^n ,(1/r) g_h^n )$,
we consider its Euclidean radial symmetrizations: this is  the radial,
non-increasing,
non-negative function
$f_* \in C_0^{\infty} (\re^n )$ such that for each $t>0$
$Vol (\{ f>t \} ) = Vol( \{ f_* >t \} )$. It is elementary
that for any $q>0$, ${\| f_* \|}_q =   {\| f \|}_q$. On the
other hand since the isoperimetric profile of
$({\bf H}^n ,(1/r) g_h^n )$ is greater than that of Euclidean space
it follows from the coarea formula that
${\| \nabla f \| }_2 \geq {\| \nabla f_* \|}_2 $.

Then, if we let ${\bf s}= -rn(n-1) +m(m-1)$, we have

$$Y_{(1/r) g_h^n + g_0^n } (f) =  V_{g_0^m}^{\frac{2}{m+n}}
\frac{\int_{{\bf{H}}^n}a_{n+m}|\nabla f|^2+ {\bf s} f^2
  dv_{(1/r) g_h^n}}{\|f\|_{p_{n+m}}^2} \geq $$

$$ \frac{-rn(n-1) +m(m-1)}{m(m-1)}
 V_{g_0^m}^{\frac{2}{m+n}}
\frac{\int_{{\bf{R}}^n}a_{n+m}|\nabla f_* |^2+ m(m-1)
 f_*^2dv_{g_e^n}}{\|f_* \|_{p_{n+m}}^2} $$

$$= \frac{-rn(n-1) +m(m-1)}{m(m-1)} Y_{g_e^m  +  g_0^n} (f_* ). $$

And the proposition follows.

\end{proof}

\subsection{${\bf H}^2 \times S^2$}

 It follows from Proposition 4.2 that
if $r\in [0, 0.01]$ then $Q_{2,2}(r) \geq 0.99 Q_{2,2}(0) =58.81076$.
It is  known that
$Q_{2,2} (1)= Y({\bf H}^2 \times S^2 , g_h^2+g_0^2 )=Y(S^{4})
=61.56239 > Q_{2,2} (0) =
Y_{\re^2}(S^2 \times \re^2 )= 59.40481 $.

Let $$s_2=\Big(\frac{0.99Q_{2,2}(0)}{Q_{2,2}(1)}\Big)^{2}.$$

Then $0.99Q_{2,2}(0)=s_2^{1/2}Q_{2,2}(1)$. By Lemma 4.1 it follows
that  $Q_{2,2}(s)\geq 0.99Q_{2,2}(0)$  for any $s\in [s_2,1]$. On the
other hand, as explained in Section 3, we can numerically compute
$Q_{2,2} (s_2) =   61.55039>0.99Q_{2,2}(0)=58.81076$. Let
$$s_3=\Big(\frac{0.99Q_{2,2}(0)}{Q_{2,2}(s_2)}\Big)^{2} \ s_2=0.83317.$$
Since $0.99Q_{22}(0)=(s_3/s_2)^{1/2}Q_{22}(s_2)$,  by Lemma 4.1 and the inequality above   $Q_{2,2}(s)\geq 0.99 Q_{2,2}(0)$ if $s\in [s_3,1]$.
 Following this procedure we found a finite succession $s_i$ with
 $i=1,\dots,126$ such that
 $$s_{i+1}=\Big(\frac{0.99Q_{2,2}(0)}{Q_{2,2}(s_i)}\Big)^{2} \ s_i,$$ $Q_{2,2}(s_i)>0.99Q_{2,2}(0)$ and $s_{126}< 0.01$. Then by Lemma 4.1 and Proposition 4.2 $Q_{2,2}(r)\geq 0.99Q_{2,2}(0)$ for all $r\in [0,1]$.

In the following table we exhibit some values of the succession $s_i$:

\small

\begin{center}
\begin{tabular}{||c|c|c|c|c|c|c||}\hline
$i$&21&42&63&84&105&126\\ \hline
$s_i$&0.22732 &0.09051 & 0.04630 &0.02641 & 0.01593& 0.00992\\ \hline
$Q_{2,2}(s_i)$& 60.42277&59.87433 &59.65783 & 59.55268& 59.49515& 59.46143 \\\hline
\end{tabular}
\end{center}
\normalsize
\vspace{0.2 cm}

The graph of $Q_{2,2}$ in $[0,1]$ is :
\begin{center}
\includegraphics[scale=0.5]{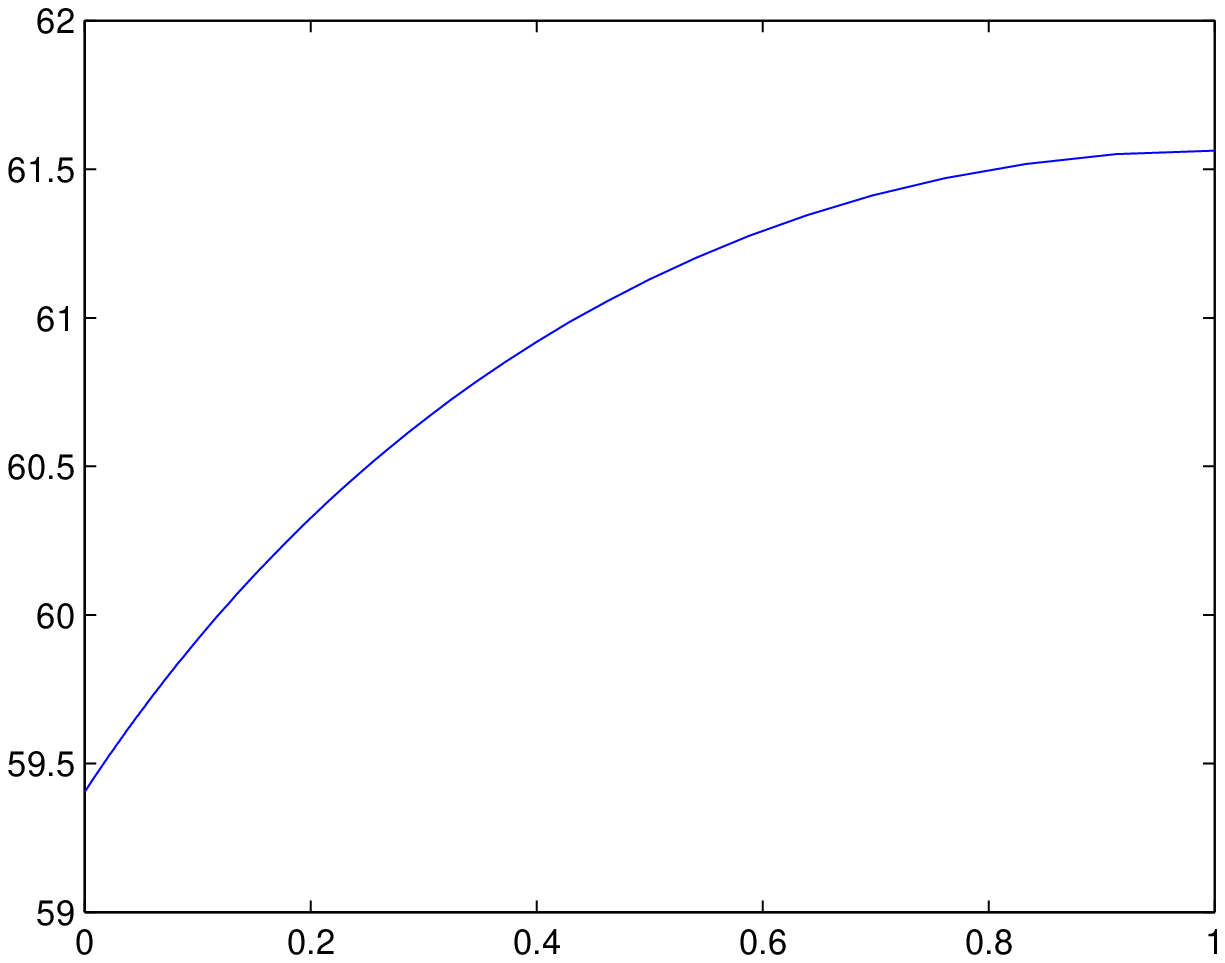}\\
\end{center}

\subsection{${\bf H}^2 \times S^3$} By Proposition 4.2
$Q_{2,3}(r)\geq 0.99Q_{2,3}(0)$ for $r\in[0, 0.03]$. It is  known that
$Q_{2,3}(0)=78.18644$ and $Q_{2,3}(1)=78.99686$. As in the case
${\bf{H^2}}\times S^2$ we can numerically compute a finite succession
$s_i$ with $i=1,\dots ,152$, such that $s_1=1$,
$$s_{i+1}=\Big(\frac{0.99Q_{2,3}(0)}{Q_{2,3}(s_i)}\Big)^{\frac{5}{3}}
\ s_i,$$ $Q_{2,3}(s_i)>0.99Q_{2,3}(0)$ and $s_{152}<0.03$. Therefore, Theorem 1.4  for  $(n,m)=(2,3)$ follows from Lemma 4.1  and Proposition 4.2.
The following table includes some values of the succession $s_i$:
\small

\begin{center}
\begin{tabular}{||c|c|c|c|c|c|c||}\hline
$i$&25 &51 &76 &101 &126 &152\\ \hline
$s_i$&0.46075 & 0.22854&0.12886 & 0.07706  & 0.04774 & 0.02968 \\ \hline
$Q_{2,3}(s_i)$&78.79217 & 78.55030 &78.40924 &78.32559 &78.27483 & 78.24226\\\hline
\end{tabular}
\end{center}

\vspace{0.2 cm}

The graph of $Q_{2,3}$ in $[0,1]$ is :
\begin{center}
\includegraphics[scale=0.5]{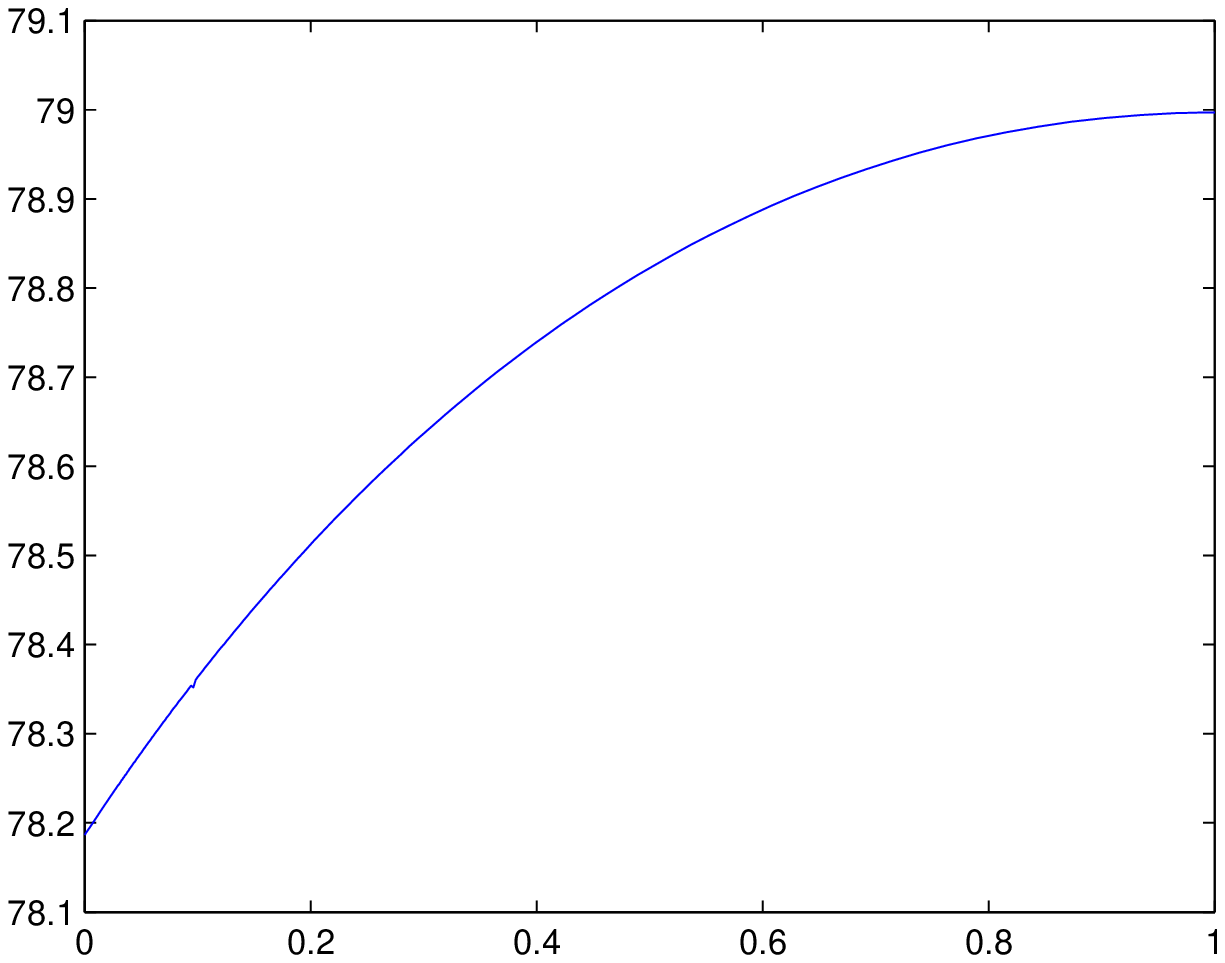}\\
\end{center}

\subsection{${\bf H}^3 \times S^2$} Recall that
$s_r\leq 0$ for $r\geq 1/3$. As in  the cases ${\bf{H^2}}\times S^2$
and ${\bf{H^2}}\times S^3$  we  found a succession $s_i$ with the
properties  described above, which proves the Theorem 1.3 in this
situation.
In this case $Q_{3,2}(1)=78.99686$, $Q_{3,2}(0)=75.39687$
and the last term $s_{132} < 1/300$.

\begin{center}
\begin{tabular}{||c|c|c|c|c|c|c||}\hline
$i$ &9 &33 & 57&81 &105 & 132\\ \hline
$s_i$ & 0.36158& 0.07155&0.02794 & 0.01315  &0.00668  & 0.00325 \\ \hline
$Q_{3,2}(s_i)$&77.77070 &76.03779  &  75.66151&75.52397& 75.46201 &75.42872 \\\hline
\end{tabular}
\end{center}

\vspace{0.2 cm}
The graph of $Q_{3,2}$ in $[0,1]$ is :

\begin{center}
\includegraphics[scale=0.5]{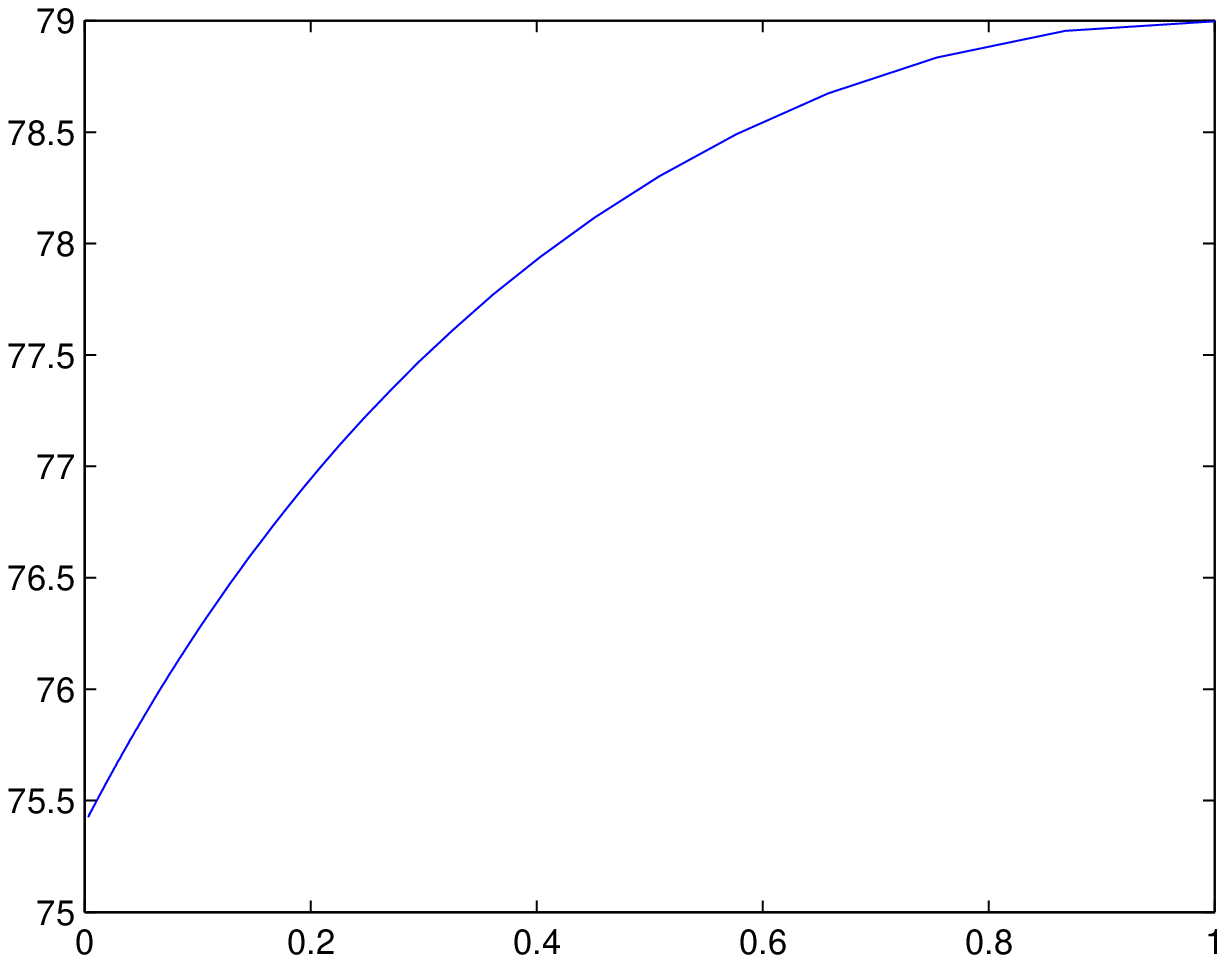}\\
\end{center}

\end{document}